\setlist[enumerate]{noitemsep}
\setlist[enumerate,1]{label=(\alph*), ref=(\alph*)}
\setlist[enumerate,2]{label=(\roman*),
ref=\theenumi(\roman*)}
\DeclareMathOperator{\ob}{ob}
\newcommand{\cat}[1]{\mathbf{#1}} 
\newcommand{\op}{\mathrm{op}} 
\newcommand{\thg}{{\mathord{\text{--}}}}
\newcommand{\abs}[1]{{\left|{#1}\right|}}
\newcommand{\spn}[1]{{\left<{#1}\right>}}
\newcommand{\cd}[2][]{\vcenter{\hbox{\xymatrix#1{#2}}}}
\newcommand{\cdl}[2][]{\xymatrix@1#1{#2}}
\newcommand{\A}{{\mathcal A}}
\newcommand{\B}{{\mathcal B}}
\newcommand{\C}{{\mathcal C}}
\newcommand{\D}{{\mathcal D}}
\newcommand{\E}{{\mathcal E}}
\newcommand{\M}{{\mathcal M}}
\newcommand{\V}{{\mathcal V}}
\newcommand{\W}{{\mathcal W}}
\newcommand{\xtor}[1]{\cdl[@1]{{} \ar[r]|-{\object@{|}}^{#1} & {}}}
\newcommand{\tor}{\ensuremath{\relbar\joinrel\mapstochar\joinrel\rightarrow}}
\newcommand{\twocong}[2][0.5]{\ar@{}[#2] \save ?(#1)*{\cong}\restore}
\newcommand{\rtwocell}[3][0.5]{\ar@{}[#2] \ar@{=>}?(#1)+/l 0.2cm/;?(#1)+/r 0.2cm/^{#3}}
\newcommand{\rtwocello}[3][0.5]{\ar@{}[#2] \ar@{=>}?(#1)+/l 0.2cm/;?(#1)+/r 0.2cm/_{#3}}
\newcommand{\ltwocell}[3][0.5]{\ar@{}[#2] \ar@{=>}?(#1)+/r 0.2cm/;?(#1)+/l 0.2cm/^{#3}}
\newcommand{\ltwocello}[3][0.5]{\ar@{}[#2] \ar@{=>}?(#1)+/r 0.2cm/;?(#1)+/l 0.2cm/_{#3}}
\newcommand{\dtwocell}[3][0.5]{\ar@{}[#2] \ar@{=>}?(#1)+/u  0.2cm/;?(#1)+/d 0.2cm/^{#3}}
\newcommand{\dthreecell}[3][0.5]{\ar@{}[#2] \ar@3{->}?(#1)+/u  0.2cm/;?(#1)+/d 0.2cm/^{#3}}
\newcommand{\utwocell}[3][0.5]{\ar@{}[#2] \ar@{=>}?(#1)+/d 0.2cm/;?(#1)+/u 0.2cm/_{#3}}
\newcommand{\dtwocelltarg}[3][0.5]{\ar@{}#2 \ar@{=>}?(#1)+/u  0.2cm/;?(#1)+/d 0.2cm/^{#3}}
\newcommand{\utwocelltarg}[3][0.5]{\ar@{}#2 \ar@{=>}?(#1)+/d  0.2cm/;?(#1)+/u 0.2cm/_{#3}}
\newtheorem{Prop}{Proposition}
\newtheorem{Thm}[Prop]{Theorem}
\renewcommand{\paragraph}{\@startsection
{paragraph}%
{3}%
{0mm}%
{-\baselineskip}%
{-0.4em plus 0.2em minus 0.2em}%
{\normalfont\normalsize\bfseries}}%
\makeatletter \@namedef{itemize*}{\itemize\parsep\z@ \parskip\z@}
\def\Pr@@f{\subsubsection*{\textbf{Proof}}}
\def\pr@@f[#1]{\subsubsection*{{\textbf{Proof}} #1}}
\DeclareMathAlphabet      {\mathbf}{OT1}{cmr}{b}{n}
\begin{document}

\title{Diagrammatic characterisation of\\enriched absolute colimits}
\author{Richard Garner}
\date{\today}

\copyrightyear{2014}
\maketitle
\begin{abstract}
We provide a diagrammatic criterion for the existence of an absolute
colimit in the context of enriched category theory.
\end{abstract}
\vskip\baselineskip An \emph{absolute colimit} is one preserved by any
functor; the class of absolute colimits was characterised for ordinary
categories by Par\'e~\cite{Pare1971On-absolute} and for enriched ones
by Street~\cite{Street1983Absolute}. For categories enriched over a
monoidal category $\V$ or bicategory $\W$, the appropriate colimits
are the weighted colimits of~\cite{Street1983Enriched}, and Street's
characterisation is in fact one of the class of \emph{absolute
  weights}: those weights $\varphi$ such that $\varphi$-weighted
colimits are preserved by any functor. This is different to Par\'e's
result, which gives a diagrammatic characterisation of when a
particular cocone is absolutely colimiting. In this note, we give
a result in the enriched context which is closer in spirit to Par\'e's
than to Street's. This result is very useful in practice, but seems not
to be in the literature; we set it down for future use.

\section{The result}
\subsection{Background} We work in the context of bicategory-enriched
category theory; see~\cite{Street1983Enriched}, for example. $\W$ will
denote a bicategory whose homs are locally small, complete and
cocomplete categories, and which is \emph{biclosed}, meaning that for
each $1$-cell
$A \colon x \to y$ in $\W$, the composition functors $A \otimes (\thg)
\colon \W(z,x) \to \W(z,y)$ and $(\thg) \otimes A \colon \W(y,z) \to
\W(x,z)$ have right adjoints $[A, \thg]$ and $\spn{A, \thg}$
respectively.

A \emph{$\W$-category}
$\A$ comprises a set $\ob \A$ of objects; for each $a \in \ob \A$ an
object $\epsilon a \in \ob \W$, the \emph{extent} of $a$; for each
pair of objects $a,b$, a hom-object $\C(b,a) \in \W(\epsilon a,
\epsilon b)$; and identity and composition $2$-cells $\iota \colon
I_{\epsilon a} \to \C(a,a)$ and $\mu \colon \C(c,b) \otimes \C(b,a)
\to \C(c,a)$ satisfying the expected axioms.
A $\W$-\emph{profunctor} $M \colon \A \tor \B$ is given by objects
$M(b,a) \in \W(\epsilon a, \epsilon b)$ and action maps $\mu \colon
\B(b',b) \otimes M(b,a) \otimes \A(a,a') \to M(b',a')$ satisfying
unitality and associativity axioms. A \emph{profunctor map} $M \to M'
\colon \A \tor \B$ comprises maps $M(b,a) \to M'(b,a)$ compatible with
the actions by $\A$ and $\B$. The identity profunctor $\A \colon \A
\tor A$ has components $\A(b,a)$ with action given by composition in
$\A$. For profunctors $M \colon \A \tor \B$ and $N \colon \B \tor \C$
with $\B$ small, the tensor product $N \otimes_\B M \colon \A
\tor \C$ has components given by coequalisers
\[
\textstyle \sum_{b,b'} N(c,b) \otimes \B(b,b') \otimes M(b', a) \rightrightarrows
\sum_b N(c,b) \otimes M(b,a) \twoheadrightarrow (N \otimes_\B M)(c,a)
\]
and actions by $\C$
and $\A$ inherited from $N$ and $M$. Small
$\W$-categories, profunctors and profunctor maps comprise a bicategory
$\W\text-\cat{Mod}$. There is a full embedding $\W \to
\W\text-\cat{Mod}$ sending $X$ to the $\W$-category $X$ with one
object $\star$ with $\epsilon(\star) = X$ and $X(\star, \star) = I_X$.

If $\A$ and $\B$ are $\W$-categories, then a \emph{$\W$-functor} $F
\colon \A \to \B$ comprises an extent-preserving assignation on
objects, together with $2$-cells $\C(b,a) \to \D(Fb,Fa)$
subject to two functoriality
axioms. 
If $F \colon \A \to \C$ and $G \colon \B \to \C$ are $\W$-functors
then there is an induced profunctor $\C(G, F) \colon \A \tor \B$ with
components $\C(G,F)(b,a) = \C(Gb,Fa)$ and action derived from the
action of $F$ and $G$ on homs and composition in $\C$.

Given profunctors $M \colon \A \tor \B$, $N \colon \B \tor \C$ and $L
\colon \A \tor \C$ with $\B$ small, a profunctor map $u \colon N
\otimes_\B M \to L$ is said to \emph{exhibit $M$ as $[N,L]$} if every
map $f \colon N \otimes_\B K \to L$ is of the form $u \circ (N
\otimes_\B \bar f)$ for a unique $\bar f \colon K \to M$; while it is
said to \emph{exhibit $N$ as $\spn{M,L}$} if every $f \colon K
\otimes_\B M \to L$ is of the form $u \circ (\bar f \otimes_\B M)$ for
a unique $\bar f \colon K \to N$.

Given $\varphi \colon \A \tor \B$ in $\W\text-\cat{Mod}$ and a functor
$F \colon \B \to \C$, a \emph{$\varphi$-weighted colimit} of $F$ is a
functor $Z \colon \A \to \C$ and profunctor map $a \colon \varphi
\to \C(F, Z)$ such that for each $C \in \C$, the map
\begin{equation}
\varphi \otimes_\A \C(Z,C) \xrightarrow{a \otimes_\A 1} \C(F,Z)
\otimes_\A \C(Z,C) \xrightarrow{\ \ \mu \ \ }
\C(F,C)\label{eq:1}
\end{equation}
exhibits $\C(Z,C)$ as $[\varphi, \C(F,C)]$. A functor $G \colon \C \to
\D$ \emph{preserves} this colimit just when the composite $\varphi \to
\C(F, Z) \to \D(GF, GZ)$ exhibits $GZ$ as a $\varphi$-weighted colimit
of $GF$; the colimit is \emph{absolute} when it is preserved by all
functors out of $\C$. \cite{Street1983Absolute} proves that
$\varphi$-weighted colimits are absolute if and only if $\varphi$
admits a right adjoint in $\W\text-\cat{Mod}$.

Dually, given $\psi \colon \B \tor \A$ in $\W\text-\cat{Mod}$ and a
functor $F \colon \B \to \C$, a \emph{$\psi$-weighted limit} of $F$
is a functor $Z \colon \A \to \C$ and map $b \colon \psi
\to \C(Z, F)$ such that for each $C \in \C$, the map
\begin{equation*}
\C(C, Z) \otimes_\A \psi \xrightarrow{1 \otimes_\A b} \C(C,Z) \otimes_\A
\C(Z,F) \xrightarrow{ \ \ \mu \ \ } \C(C,F)
\end{equation*}
exhibits $\C(C,Z)$ as $\spn{\psi, \C(C,Z)}$. Absoluteness of limits is
defined as before; now every limit weighted by $\psi \colon \B
\tor \A$ is absolute if and only if $\psi$ has a \emph{left} adjoint in
$\W\text-\cat{Mod}$.



\begin{Thm}
  \label{prop:1}
  Let $\varphi \colon \A \tor \B$ admit the right adjoint $\psi \colon
  \B \tor \A$ in $\W\text-\cat{Mod}$, and let $F \colon \B \to \C$ and
   $Z \colon \A \to \C$ be $\W$-functors. There is a bijective
  correspondence between data of the following forms:
  \begin{enumerate}
  \item A map $a \colon \varphi \to \C(F, Z)$ exhibiting $Z$ as a
    $\varphi$-weighted colimit of $F$;
  \item A map $b \colon \psi \to \C(Z, F)$ exhibiting $Z$ as a
    $\psi$-weighted limit of $F$;
  \item Maps $a \colon  \varphi \to \C(F, Z)$ and $b \colon \psi \to \C(Z,
    F)$ such that the following two squares commute in
    $\W\text-\cat{Mod}(\A, \A)$ and $\W\text-\cat{Mod}(\B, \B)$:
\begin{equation}
\cd{
\A \ar[r]^\eta \ar[d]_{Z} & \psi \otimes_\B \varphi \ar[d]^{b
  \otimes_\B a} \\
\C(Z,Z) \ar@{<-}[r]_-{\ \mu} & \C(Z,F) \otimes_\B \C(F,Z)
} \qquad \qquad
\cd{
\varphi \otimes_\A \psi \ar[r]^{\varepsilon} \ar[d]_{a \otimes_\A b} & \B \ar[d]^{F} \\
\C(F,Z) \otimes_\A \C(Z,F) \ar[r]_-{\mu} & \C(F,F)\rlap{ .}
}\label{eq:4}
\end{equation}
  \end{enumerate}
\end{Thm}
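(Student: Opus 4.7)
The theorem asserts a mate correspondence in the bicategory $\W\text-\cat{Mod}$, and I will prove it as such. It suffices to establish the equivalence (a) $\Leftrightarrow$ (c); the equivalence (b) $\Leftrightarrow$ (c) then follows by a formally dual argument, obtained by reversing $1$-cells in $\W\text-\cat{Mod}$, under which $\varphi \dashv \psi$ becomes $\psi \dashv \varphi$, $\eta$ and $\varepsilon$ swap, colimits become limits, and the two squares of (c) are interchanged.

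For (c) $\Rightarrow$ (a), given $(a, b)$ satisfying the two squares of~\eqref{eq:4}, I verify the universal property of $a$ as a $\varphi$-weighted colimit cocone. Given any test map $f \colon \varphi \otimes_\A K \to \C(F, C)$, define $\bar f \colon K \to \C(Z, C)$ by the mate formula
\[
\bar f \ = \ \mu \circ (b \otimes_\B f) \circ (\eta \otimes_\A K).
\]
To check that $\mu \circ (a \otimes_\A \bar f) = f$, I expand and use associativity of $\mu$ to isolate the factor $\mu \circ (a \otimes_\A b) \colon \varphi \otimes_\A \psi \to \C(F, F)$, replace this via the second square of~\eqref{eq:4} by $F \circ \varepsilon$, and collapse using the triangle identity $(\varepsilon \otimes_\B \varphi) \circ (\varphi \otimes_\A \eta) = 1_\varphi$ together with the unit law for the action of $\C(F, F)$ on $\C(F, C)$. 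Uniqueness of $\bar f$ is argued dually: if $\bar g \colon K \to \C(Z, C)$ also satisfies $\mu \circ (a \otimes_\A \bar g) = f$, then substituting this into the defining formula for $\bar f$ and reapplying associativity exposes the factor $\mu \circ (b \otimes_\B a) \circ \eta$, which by the first square of~\eqref{eq:4} equals the unit $Z \colon \A \to \C(Z, Z)$; the unit law of $\mu$ then collapses the resulting expression to $\bar g$.

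For (a) $\Rightarrow$ (c), the colimit UP applied to the profunctor map $F \circ \varepsilon \colon \varphi \otimes_\A \psi \to \C(F, F)$ yields a unique $b \colon \psi \to \C(Z, F)$ with $\mu \circ (a \otimes_\A b) = F \circ \varepsilon$; this is the content of the second square. For the first square, I show that $Z$ and $\mu \circ (b \otimes_\B a) \circ \eta$, as maps $\A \to \C(Z, Z)$, correspond to the same map $\varphi \to \C(F, Z)$ under the colimit UP bijection at $C = Z$, namely $a$ itself, and hence coincide. For $Z$ this is just the unit law of $\mu$; for the other composite, associativity of $\mu$ isolates a factor of $\mu \circ (a \otimes_\A b)$, which the newly-established second square replaces by $F \circ \varepsilon$, after which the triangle identity $(\varepsilon \otimes_\B \varphi) \circ (\varphi \otimes_\A \eta) = 1_\varphi$ and a further unit law yield $a$.

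The main obstacle in either direction is purely bookkeeping: keeping track of which profunctor lives in which hom category of $\W\text-\cat{Mod}$, and of whether each tensor is formed over $\A$ or over $\B$, since the same profunctor $\mu$ is being used to encode compositions at three different pairs of objects. Once this bookkeeping is set up, both diagram chases are entirely formal, driven by associativity and unitality of $\mu$, the two triangle identities of the profunctor adjunction $\varphi \dashv \psi$, and the universal property of the colimit.
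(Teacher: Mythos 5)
Your proposal is correct and follows essentially the same route as the paper's own proof: the same mate formula $\bar f = \mu \circ (b \otimes_\B f) \circ (\eta \otimes_\A 1)$ for (c) $\Rightarrow$ (a), the same use of the right square plus triangle identities for existence and of the left square plus unitality for uniqueness, the same construction of $b$ from the universal property applied to $F \circ \varepsilon$, and the same appeal to duality for (b). The one step you elide is that invoking the colimit universal property ``at $C = Z$'' (i.e.\ for maps into $\C(F,F)$ or $\C(F,Z)$, whose second variable is a functor rather than a single object of $\C$) requires first upgrading the pointwise universal property to a profunctor-level one; the paper does this by evaluating in the second variable and using uniqueness of the induced factorisations to see they assemble into a profunctor map.
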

\begin{proof}
Suppose first given (a); consider the composite
  profunctor map
\begin{equation}
\varphi \otimes_\A \C(Z,F) \xrightarrow{a \otimes_\A 1} \C(F,Z)
\otimes_\A \C(Z,F) \xrightarrow{\ \ \mu \ \ }
\C(F,F)\rlap{ .}\label{eq:3}
\end{equation}
Evaluating in the second variable at any $a \in \A$ yields the
map~\eqref{eq:1} exhibiting $\C(Z,Fa)$ as $[\varphi, \C(F,Fa)]$; it
follows easily that~\eqref{eq:3} exhibits $\C(Z,F)$ as $[\varphi,
\C(F,F)]$. Applying this universality to the composite
$\varepsilon \circ F \colon \varphi \otimes_\A \psi \to \B \to
 \C(F,F)$
yields a unique map $b \colon \psi \to \C(Z,F)$ making the
right square of~\eqref{eq:4} commute; we must show that the left one
does too. Arguing as before shows that
\begin{equation}
  \varphi \otimes_\A \C(Z,Z) \xrightarrow{a \otimes_\A 1} \C(F,Z)
\otimes_\A \C(Z,Z) \xrightarrow{\ \ \mu \ \ }
\C(F,Z)\label{eq:5}
\end{equation}
exhibits $\C(Z,Z)$ as $[\varphi, \C(F,Z)]$. It thus suffices to show
that the left square of~\eqref{eq:4} commutes after applying the
functor $\varphi \otimes_\A (\thg)$ and postcomposing
with~\eqref{eq:5}; which follows by a short calculation using
commutativity in the right square and the triangle identities.

So from the data in (a) we may obtain that in (c), and the assignation
is injective, since $b$ is uniquely determined by universality of $a$
and commutativity on the right of~\eqref{eq:4}. For surjectivity,
suppose given $a$ and $b$ as in (c); we must show that $a$ exhibits
$Z$ as a $\varphi$-weighted colimit of $F$, in other words, that for
each $C \in \C$, the map~\eqref{eq:1} exhibits $\C(Z,C)$ as $[\varphi,
\C(F,C)]$, or in other words, that for each map $f \colon \varphi
\otimes_\A K \to \C(F,C)$, there is a unique map $\bar f \colon K \to
\C(Z,C)$ such that $f = \mu \circ (a \otimes_\A \bar f) \colon \varphi
\otimes_\A K \to \C(F,Z) \otimes_\A \C(Z,C) \to \C(F,C)$.
For existence, we let $\bar f$ be the
\begin{equation}
K \cong \A \otimes_\A K \xrightarrow{\eta \otimes_\A 1} \psi
\otimes_\B \varphi \otimes_\A K \xrightarrow{b \otimes_\B f} \C(Z,F)
\otimes_\B \C(F,C) \xrightarrow{\mu} \C(Z,C)\rlap{ ;}\label{eq:7}
\end{equation}
now rewriting with the right-hand square of~\eqref{eq:4} and using
the triangle identities and $F$'s preservation of units shows
that $f = \mu \circ (a \otimes_\A \bar f)$. For uniqueness, let $g
\colon K \to \C(Z,C)$ also satisfy $f = \mu \circ (a \otimes_\A g)$.
Substituting into~\eqref{eq:7} shows that $\bar f$ is the composite
\[
K \cong \A \otimes_\A K \xrightarrow{\eta \otimes_\A 1} \psi
\otimes_\B \varphi \otimes_\A K \xrightarrow{b \otimes_\B a \otimes_\A
g} \C(Z,F)
\otimes_\B \C(F,Z) \otimes_A \C(Z,C) \xrightarrow{\mu} \C(Z,C)\rlap{
  ;}
\]
which by rewriting with the left square of~\eqref{eq:4} and using
$Z$'s preservation of identities is equal to $g$. This proves the
equivalence (a) $\Leftrightarrow$ (c); now (a) $\Leftrightarrow$ (b) follows by duality.
\end{proof}

\subsection{Examples} We first consider examples wherein $\W$ is the
one-object bicategory corresponding to a monoidal category $\V$.
\begin{itemize}
\item Let $\V = \cat{Set}$, and let $\varphi$ be the weight for
  splittings of idempotents. The result recovers the bijection, for an
  idempotent $e \colon A \to A$, between: maps $p \colon A \to B$ coequalising
  $e$ and $1_A$; maps $i \colon B \to A$ equalising $e$ and $1_A$; and
  pairs $(i,p)$ with $pi = 1_A$ and $ip = e$.\vskip-0.25\baselineskip
\item Let $\V = \cat{Set}_\ast$, and let $\varphi$ be the weight for
  an initial object. The result recovers the bijection in a pointed
  category between: initial objects; terminal objects; and objects $X$
  with $1_X = 0_X$.\vskip-0.25\baselineskip
\item Let $\V = \cat{Ab}$, and let $\varphi$ be the weight for binary
  coproducts. The result recovers the bijection, for objects $A, B$ in
  a pre-additive category, between: coproduct diagrams $i_1
  \colon A \to Z \leftarrow B \colon i_2$; product diagrams $p_1
  \colon A \leftarrow Z \rightarrow B \colon p_2$; and tuples $(i_1,
  i_2, p_1, p_2)$ such that $p_ji_k = \delta_{ik}$ and $i_1p_1 +
  i_2p_2 = 1_Z$.
\item Let $\V = \bigvee\text-\cat{Lat}$, and let $\varphi$ be the
  weight for $J$-fold coproducts (for $J$ a small set). The result
  recovers the bijection, for objects $(A_j : j \in J)$ in a
  sup-lattice enriched category, between: coproduct diagrams $(i_j
  \colon A_j \to Z)_{j \in J}$; product diagrams $(p_j \colon Z \to
  A_j)_{j \in J}$; and families $(i_j)_{ j \in J}$ and $(p_j)_{j \in
    J}$ with $p_j i_k = \delta_{jk}$ and $\bigvee_j i_j p_j = 1_Z$.
\item Let $\V= k\text-\cat{Vect}$ for $k$ a field of characteristic
  zero, let $G$ be a finite group, and let $\varphi \colon k \tor kG$
  be the trivial right $kG$-module $k$. By Burnside's Lemma, $\varphi$
  has right adjoint $kG \tor k$ given by the trivial left $kG$-module
  $k$. Now the result recovers the bijection, for a $G$-representation
  $A$ in a $k$-linear category, between: maps $p \colon A \to Z$
  exhibiting $Z$ as an object of coinvariants of $A$; maps $i \colon Z
  \to A$ exhibiting $Z$ as an object of invariants of $A$; and pairs
  of maps $(i,p)$ with $pi=1_Z$ and $ip = \tfrac{1}{\abs G} \Sigma_{g
    \in G}\,g $.
\end{itemize}
We conclude with two examples where $\W$ is a genuine bicategory.
\begin{itemize}
\item Let $(\C, j)$ be a subcanonical site, and let $\W$ denote the
  full sub-bicategory of $\cat{Span}(\cat{Sh}(\C))^\op$ on objects of the
  form $\C(\thg, X)$. To any prestack $p \colon \E \to \C$ over $\C$,
  we may (as in~\cite{Betti1983Variation}) associate a $\W$-category
  with objects those of $\E$, extents $\epsilon(a) = p(a)$, and
  hom-object from $a$ to $b$ given by the span $\C(\thg, pa)
  \leftarrow \E(a,b) \rightarrow \C(\thg, pb)$ in $\cat{Sh}(\C)$;
  here $\E(a,b)(x)$ is the set of all triples $(f,g,\theta)$ with $f
  \colon pa \leftarrow x \rightarrow pb \colon g$ in $\C$ and $\theta
  \colon f^\ast(a) \to g^\ast(b)$ in $\E_x$ (note that $\E(a,b)$ is a
  sheaf by the prestack condition).

  For any cover $(f_i \colon U_i \to U)_{i \in I}$ in $\C$, we have a
  $\W$-category $R[f]$ with object set $I$, extents $\epsilon(i) =
  U_i$ and hom-objects $R[f](j, i) = \C(\thg, U_j) \leftarrow \C(\thg,
  U_j \times_U U_i) \rightarrow \C(\thg, U_i)$. There is a profunctor
  $\varphi \colon U \tor R[f]$ with components given by the spans
  $\varphi(i, \star) = \C(\thg, U_i) \leftarrow \C(\thg, U_i)
  \rightarrow \C(\thg, U)$. Writing $\psi \colon R[f] \tor U$ for the
  reverse profunctor, it is not hard to see that $\varphi \dashv \psi$
  (in fact they are adjoint pseudoinverse).

  The result now says the following. Given a prestack $p \colon \E \to
  \C$, a cover $(f_i \colon U_i \to U)$ in $\C$, and a family of spans
  $p_{ij} \colon a_i \leftarrow a_{ij} \to a_j \colon q_{ij}$ in $\E$
  whose legs are cartesian over the projections $U_i \leftarrow U_i
  \times_U U_j \to U_j$, there is a bijection between: cocones $(h_i
  \colon a_i \to a)$ in $\E$ over the $f_i$'s that are colimiting for
  the diagram comprised of the $p_{ij}$'s and $q_{ij}$'s; universal
  objects $a \in \E_U$ equipped with vertical maps $f_i^\ast(a) \to
  a_i$ fitting into double pullback squares
\[
\cd{
  f^\ast_i(a) \ar[d] & \ar[l] \cdot \ar[d] \ar[r] & f^\ast_j(a) \ar[d] \\
  a_i & a_{ij} \ar[l]^{p_{ij}} \ar[r]_{q_{ij}} & a_j\rlap{ ;}
}
\]
and objects $a \in \E_U$ equipped with a family of maps $(h_i \colon
a_i \to a)$ cartesian over the $f_i$'s. This
generalises~\cite[Proposition~5.2(b)]{Street1983Enriched}\footnote{The
proposition numbering here is taken from the TAC reprint.}.

\item Let $\W$ denote the bicategory whose objects are sets, and whose
  hom-category $\W(X,Y)$ is the category of finitary functors
  $\cat{Set} / Y \to \cat{Set} / X$; note that $\W(X,Y) \simeq
  [\cat{Fam}(Y) \times X, \cat{Set}]$, where $\cat{Fam}(Y)$ has as
  objects, finite lists of elements of $Y$, and as maps $(y_0, \dots,
  y_m) \to (z_0, \dots, z_n)$, functions $f \colon [m] \to [n]$ such
  that $y_{i} = z_{f(i)}$. To any cartesian multicategory $M$ (i.e., a
  \emph{Gentzen multicategory} in the sense
  of~\cite{Lambek1989Multicategories}) we may associate a
  $\W$-category $\M$ whose objects of extent $X$ are $X$-indexed
  families of objects of $M$, and whose hom-object between families
  $(a_x)_{x \in X}$ and $(b_y)_{y \in Y}$ is the presheaf
\[
\M((b_y), (a_x))(y_0, \dots, y_m; x) = M(b_{y_0}, \dots, b_{y_m}; a_x)
\]
in $[\cat{Fam}(Y) \times X, \cat{Set}]$; reindexing along maps in $Y$
makes use of the cartesianness of the multicategory structure.
Composition and units in $\M$ follow from those in $M$.

Given a finite set $X = \{x_0, \dots, x_n\}$, let $\varphi \colon 1
\tor X$ be the $\W$-profunctor whose unique component is the
representable $y(x_0, \dots, x_n; \star) \in [\cat{Fam}(X) \times 1,
\cat{Set}]$. This has a right adjoint $\psi \colon X \tor 1$ whose
unique component is the presheaf $\Sigma_{x \in X} y(\star; x) \in
[\cat{Fam}(1) \times X, \cat{Set}]$. The result now establishes a
bijection, for any finite family $(a_0, \dots, a_n)$ of objects in a
cartesian multicategory $M$, between data of the following three
forms: first, an object $a$ and a multimap $i \in M(a_0, \dots, a_n;
a)$, composition with which induces bijections between $M(b_0, \dots,
b_k, a, c_0, \dots, c_\ell; d)$ and $M(b_0, \dots, b_k, a_0, \dots,
a_n, c_0, \dots, c_\ell; d)$; second, an object $a$ and unary maps
$p_j \in M(a; a_j)$, composition with which establishes bijections
between $M(b_0, \dots, b_k; a)$ and $\Pi_j M(b_0, \dots, b_k; a_j)$;
third, an object $a$ and maps $i$ and $p_j$ as above such that $p_j
\circ i = \pi_j \in M(a_0, \dots, a_n; a_j)$ and $i \circ (p_0, \dots,
p_n) = 1_a \in M(a;a)$. This
generalises \cite[Proposition~3.5]{Garner2014Lawvere}.

\end{itemize}

\bibliography{bibdata}

\end{document}